\newtheorem*{lemma}{Lemma}
\newtheorem*{theorem}{Theorem}
\title{Complexity of majorants}
\author{Alexander Shen\thanks{LIRMM, CNRS \& Univesity of Montpellier, \protect\url{alexander.shen@lirmm.fr}. Supported by RaCAF ANR-15-CE40-0016-01 and RFBR 19-01-00563 grants.}}
\begin{document}

\maketitle

\begin{abstract}
The minimal Kolmogorov complexity of a total computable function that exceeds everywhere all total computable functions of complexity at most $n$, is $2^{n+O(1)}$. If ``everywhere'' is replaced by ``for all sufficiently large inputs'', the answer is $n+O(1)$.
\end{abstract}

The notion of Kolmogorov complexity of computable function was first considered by Schnorr~\cite{schnorr}. The (plain) complexity of a computable function is the minimal length of a program that computes this function. As usual, we require that the programming language is optimal, i.e., leads to a minimal complexity up to $O(1)$. One can also define the plain complexity of a function as the minimal \emph{complexity} of its programs. In this case we may use any G\"odel numbering of programs.

Consider all total computable functions that have complexity at most $n$. Alexey Milovanov asked the following question: \emph{What is a minimal complexity of a total computable function that exceeds all of them}? The words  ``$f$ exceeds $g$'' can be understood in different ways. Here are two possibilities:
\begin{itemize}
\item $f$ exceeds $g$ if $f(n)> g(n)$ for all $n$;
\item $f$ weakly exceeds $g$ if $f(n)> g(n)$ for all \emph{sufficiently large} $n$.
\end{itemize}

The following simple result answers both questions.

\begin{theorem}\label{thm:bounds}
\leavevmode
\begin{itemize}
\item The minimal complexity of a total computable function that weakly exceeds all total computable functions  of complexity at most $n$ is $n+O(1)$.
\item The minimal complexity of a total computable function that exceeds all total computable functions of complexity at most $n$ is $2^{n+O(1)}$.
\end{itemize}
\end{theorem}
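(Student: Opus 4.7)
For both parts of the theorem, the natural lower bound comes from a diagonal: any total computable $F$ with $K(F) \le n$ would itself lie in the family it is required to exceed, forcing $F(x) > F(x)$ (at all sufficiently large $x$ in the weak case, at every $x$ in the strong case). This immediately yields the full weak lower bound $K(F) \ge n + O(1)$; for the strong case it only gives $K(F) > n$, and a further counting argument is required to reach $2^{n - O(1)}$.

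For the weak upper bound, I would construct $F$ of complexity $n + O(1)$ by seeding a self-bounding recursion with a busy-beaver-style integer $B$ of complexity $n + O(1)$: set $F(0) = B$ and $F(x+1) = 1 + \max\{f_p(j) : |p| \le n,\ j \le x+1,\ f_p(j) \text{ halts within } F(x) \text{ steps}\}$. The naked recursion (without $B$) has complexity only $O(\log n)$ and, by the lower bound, must fail on some total $g$ of complexity $\le n$ whose runtime outgrows the budget; the role of $B$ is to boot the time budget high enough that, for every such $g$ with program $p$, the runtime $T_p$ eventually fits inside $F(x-1)$. The main obstacle is to verify the cascade: by induction on the complexity level of $g$, the runtime $T_g$ --- itself a total computable function of complexity $K(g) + O(1)$ --- is caught by $F$ first, after which $F$'s own growth absorbs $g$ too.

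For the strong case, the upper bound $K(F) \le 2^{n + O(1)}$ is immediate: describe the set $L_n$ of total programs of length $\le n$ by its $2^{n+1}$-bit characteristic string inside $\{0,1\}^{\le n}$, and set $F(x) = 1 + \max\{f_p(x) : p \in L_n\}$, which is total and computable given $L_n$. The matching lower bound $K(F) \ge 2^{n - O(1)}$ is the main technical obstacle. My plan is to show that $F$, together with $O(n)$ bits encoding $|L_n|$, lets one recover $L_n$: using $F$ as a computable ceiling, one semi-decides ``$p \notin L_n$'' by running $p$ on all inputs for at most $F(x)$ steps each (for $p \in L_n$ the runtime $T_p$ is itself total of complexity $\le n + O(1)$, hence bounded by $F$), while $|L_n|$ tells us when the enumeration is complete. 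This gives $K(L_n) \le K(F) + O(n)$, reducing the claim to $K(L_n) \ge 2^{n - O(1)}$, which follows from an incompressibility argument on the $2^{n+1}$-bit characteristic string of $L_n$.
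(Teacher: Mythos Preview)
Your strong lower bound has a genuine gap. You reduce $K(F)\ge 2^{n-O(1)}$ to $K(L_n)\ge 2^{n-O(1)}$ and then say the latter ``follows from an incompressibility argument on the $2^{n+1}$-bit characteristic string of $L_n$''. But incompressibility is a statement about \emph{most} strings of a given length, not about one specific definable string; there is no free reason the totality bitmask should be incompressible. (For contrast: the \emph{halting} bitmask for programs of length $\le n$ is also a $2^{n+1}$-bit string, yet has complexity only $n+O(1)$, since the count of halting programs determines it.) In fact your own upper-bound construction shows $K(F)\le K(L_n)+O(1)$, so $K(L_n)$ and the minimal $K(F)$ are within $O(n)$ of each other: the claim $K(L_n)\ge 2^{n-O(1)}$ \emph{is} the theorem, not a stepping stone to it. The paper avoids this circularity by a direct game argument: with $a=2^n$ sequences Alice beats any Bob with $b<2^a$ sequences; playing Alice's computable winning strategy against the blind Bob who runs every program of length $<2^n$ yields $2^n$ total computable functions, each of complexity $n+O(1)$, that no single function of complexity $<2^n$ exceeds everywhere.

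Your reduction step is also shaky: you need $F$ to dominate the \emph{runtime} $T_p$ of every total $p$ of length $\le n$, but $K(T_p)\le |p|+c$ may exceed $n$, so a function that merely exceeds all total functions of complexity $\le n$ need not bound $T_p$. This can be repaired by a constant shift in $n$, but it should be said. The same slippage undermines your weak upper bound: the ``induction on the complexity level of $g$'' via $T_g$ does not descend, because passing from $g$ to its runtime raises the complexity bound rather than lowering it, and you never specify what property of the seed $B$ makes the cascade start. The paper's weak upper bound is more direct: knowing the single integer $|L_n|$ (an $(n{+}1)$-bit string), compute $g(k)$ by waiting until $|L_n|$ programs of length $\le n$ have halted on all of $0,\dots,k$ and output one more than their maximum at $k$; for large $k$ these are exactly the total programs.
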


\begin{proof}
The first part is easy. Obviously a function $g$ that weakly exceeds all total computable functions of complexity at most $n$ should have complexity greater than $n$ (since it does not exceed itself). On the other hand, assume that we know the number $T_n$ of programs of \emph{total} functions that have length at most~$n$. This number can be represented as an $(n+1)$-bit string. Knowing this string (and therefore knowing $n$), we can compute the following function~$g$:
\begin{quote}
\emph{to compute $g(k)$, enumerate programs that terminate at all inputs $0,1,\ldots,k$; as soon as $T_n$ programs with this property are found, output the maximal value of these programs on $k$, plus $1$.}
\end{quote}
The function $g$ is total since there are $T_n$ total programs. It weakly exceeds all total computable functions of complexity at most $n$. Indeed, if $k$ is large enough, the only programs of length at most $n$ that terminate on $0,1,\ldots, k$ are the total ones, so our computation process would discover exactly the total programs and return a number which exceeds all its values at $k$. Since $T_n$ is a string of length $n+O(1)$, the complexity of the function $g$ is at most $n+O(1)$. The first part is proven.

For the second part the upper bound is also easy. Indeed, let $T_n$ be the string of length $2^{n+1}$ that encodes information about all programs of length at most $n$ saying which of them are total. Knowing $T_n$ (and therefore knowing~$n$), we  compute $g(k)$ as the maximal value of all total programs of length at most $n$ for the input $k$, plus $1$. This function is total, exceeds all total computable functions of complexity at most $n$ (by construction), and has complexity at most $2^{n+O(1)}$.

The lower bound is a bit more difficult and it is convenient to use the game argument. Consider the following full information game with two positive integer parameters $a$ and $b$. Each of the two players, Alice and Bob, constructs sequences of natural numbers; Alice constructs $a$ sequences and Bob constructs $b$ sequences. Initially all the sequences are empty. At any moment Alice may extend any of her sequences by adding one more term (a natural number).  Bob may do the same for his sequences. The game is infinite; in the limit Alice has $a$ sequences of natural numbers (finite or infinite) and $B$ has $b$ sequences of natural numbers (finite or infinite). The winning player is determined by this limit as follows: \emph{Bob wins if one of his infinite sequences exceeds all infinite sequences of Alice}. (Finite sequences do not matter.)

Since the winner is determined by the limit position, the order of moves do not matter. One may assume, for example, that Alice and Bob make their moves in turn and each player may skip her/his turn. Indeed, player loses nothing if (s)he postpones the move.

Increasing $b$, we make the game easier for Bob, and increasing $a$ we make it easier for Alice. The following lemma solves this game.

\begin{lemma}\leavevmode
\begin{itemize}
\item If $b\ge 2^a$, Bob has a winning strategy.
\item If $b<2^a$, Alice has a winning strategy.
\end{itemize}
\end{lemma}

\begin{proof}[Proof of the lemma]
The first part of the lemma essentially repeats the argument for the upper bound, and is not used in the sequel. But the strategy is easy and it is instructive to compare it with the argument given above.

Alice constructs $a$ sequences indexed by $a$ labels. For each subset $X$ of labels (on the Alice's side) we allocate one Bob's sequence (label). Since $b\ge 2^a$, this is possible. The Bob's sequence that correspond to a set $X$ is constructed in a trivial way: its $k$th term is the maximum of $k$th terms of all Alice's sequences with labels in $X$, plus $1$; the length of Bob's sequence is the minimal length of Alice's sequences with labels in $X$. For the limit state, take the set $X$ of all labels of infinite Alice's sequences. The Bob's sequence that corresponds to $X$ is infinite and exceeds all of them.

The second part of the lemma can be proved by induction. For $a=1$ we need to consider the case $b=1$. Then Alice's strategy is to copy Bob's sequence: he has to construct an infinite sequence according to the game rules. Alice's sequence will be the same, and Bob's sequence does not exceed it.

Now the induction step. Imagine that the second claim is true for $a=k-1$ and all $b<2^{k-1}$ (equivalently, for $b=2^{k-1}-1$, since decreasing $b$ makes Bob's task only harder). Consider a winning strategy for Alice in this game. We want to use it to construct Alice's strategy for $a=k$ and $b<2^k$.  This new strategy will consist of two stages.

At the first stage Alice reserves one of $k$ sequences and does not touch it, so it remains empty, and Alice is in the situation where $a=k-1$. There is a winning strategy against Bob with $b=2^{k-1}-1$, and now Bob has more sequences. Still the strategy can be use \emph{mutatis mutandi} up to the moment when Bob has $2^{k-1}$ or more \emph{non-empty} sequences. Indeed, the labeling does not matter, so Alice may imagine that she plays against at most $2^{k-1}-1$ sequences, the non-empty ones. What happens if Alice applies her winning strategy (for $k-1$) in this setting? There are two possibilities.
\begin{itemize}
\item Bob never has $2^{k-1}$ or more non-empty sequences. Then Alice applies her strategy throughout the entire infinite game and wins (according to the induction assumption).
\item At some moment Bob creates $2^{k-1}$ or more non-empty sequences. Then Alice recalls that she has a reserved sequence that is still empty, and writes a large number as the first term of this reserved sequence. The number should be greater than all numbers that appear at the first places of the non-empty Bob sequences.

Then Alice extends this reserved sequence to an infinite sequence in an arbitrary way. More precisely, since Alice cannot add infinitely many terms in one move, this decision is essentially a commitment to add these terms one by one no matter what. After that, the $2^k$ non-empty sequences of Bob become useless since they do not exceed one of the Alice's infinite sequences.  Starting from this point, Alice ignores these useless Bob's sequences and plays with the remaining ones. There is less than $2^{k-1}$ of them, so Alice can use the induction assumption and win.

However, there is a problem: the winning strategy that exists by the induction assumption is for the initial position of the game, and now Alice already has put some numbers in her $k-1$ sequences (non-reserved ones). A priori this could prevent her from using the winning strategy. However, the properties of the game help: in this game, if Alice has a winning strategy for the initial position, she can win the game starting from any position. Indeed, let $N$ be the maximal length of all her sequences in this position. Alice may add arbitrary numbers to her sequences up to length $N$ and then forget about first $N$ terms in all sequences, and start the game anew. If she wins in this modified game, she wins in the original game, because the $N$ first terms could only make the Bob's task harder.

\end{itemize}
This argument finishes the proof of the lemma.
\end{proof}

It remains to explain how the Lemma is used to prove the theorem. This is a standard reasoning for game arguments in algorithmic information theory (see~\cite{muchnik}) and goes as follows. Consider the game for $b=2^a-1$ where $a=2^n$ and consider a ``blind'' Bob's strategy where he uses all programs of length less than $a$ (there are $2^a-1=b$ of them) to generate his sequences (applying each program to $0$, $1$,\ldots\ sequentially). Alice constructs $a=2^n$ sequences and wins. These $2^n$ Alice's sequences are computable, since both Alice and Bob strategies are computable. To compute each of these sequences it is enough to know its ordinal number that can be encoded by an $n$-bit string. Note the this string determines $n$ so we do not need to provide $n$ separately. Therefore, these $2^n$ sequences all have complexity $n+O(1)$, and no computable total function of complexity less than $a=2^n$ exceeds all infinite sequences (=total functions) among them. This finishes the proof of the lower bound.
\end{proof}

Author is grateful to all members of the LIRMM ESCAPE team (Montpellier) and  MSU/HSE Kolmogorov seminar (Moscow) who asked the question and discussed the answer.

\end{document}